\newtheorem{theorem}{Theorem}[section]
\newtheorem{lemma}[theorem]{Lemma}
\theoremstyle{definition}
\newtheorem{criterion}[]{Criterion}
\newtheoremstyle{named}{}{}{\itshape}{}{\bfseries}{.}{.5em}{\thmnote{#3's }#1} \theoremstyle{named} 
\theoremstyle{remark}
\numberwithin{equation}{section}
\numberwithin{equation}{section}
\title{Existence of a class of rotopulsators}
\author{Pieter Tibboel}
\address{\em Department of Mathematics \\ Y6524 (Yellow Zone) 6/F Academic 1\\
City University of Hong Kong, Hong Kong\\
Tat Chee Avenue\\
Kowloon Tong\\
Hong Kong}
\email{\em ptibboel@cityu.edu.hk}
\begin{document}
\maketitle
\begin{abstract}
  We prove the existence of a class of rotopulsators for the $n$-body problem in spaces of constant curvature of dimension $k\geq 2$.
\end{abstract}

\section{Introduction}
  By $n$-body problems, we mean problems where we want to find the dynamics of $n$ point particles. If the space in which such a problem is defined is a space of zero curvature, then we call any solution to such a problem for which the point particles describe the vertices of a polytope that retains its shape over time (but not necessarily its size) a homographic orbit. \\
  A rotopulsator, also known as a rotopulsating orbit, is a type of solution to an $n$-body problem for spaces of constant curvature $\kappa\neq 0$ that extends the definition of homographic orbits to spaces of constant curvature (see \cite{DK}). \\
  Homographic orbits (and therefore rotopulsators) can be used to determine the geometry of the universe locally (see for example \cite{D2}, \cite{DK}). \\
  In this paper, we will prove the existence of a subclass of rotopulsators that form a natural generalisation of orbits found in \cite{D2} and \cite{D3}. \\
  While this paper mainly builds on results obtained in \cite{D2}, \cite{D3} and \cite{T},  research on $n$-body problems for spaces of constant curvature goes back to Bolyai \cite{BB} and Lobachevsky \cite{Lo}, who independently proposed a curved 2-body problem in hyperbolic space $\mathbb{H}^{3}$ in the 1830s. In later years, $n$-body problems for spaces of constant curvature have been studied by mathematicians such as Dirichlet, Schering \cite{S1}, \cite{S2}, Killing \cite{K1}, \cite{K2}, \cite{K3} and Liebmann \cite{L1}, \cite{L2}, \cite{L3}. More recent results were obtained by Kozlov, Harin \cite{KH}, but the study of $n$-body problems in spaces of constant curvature for the case that $n\geq 2$ started with \cite{DPS1}, \cite{DPS2}, \cite{DPS3}  by Diacu, P\'erez-Chavela, Santoprete. Further results for the $n\geq 2$ case were then obtained by Cari\~nena, Ra\~nada, Santander \cite{CRS}, Diacu \cite{D1}, \cite{D2}, \cite{D3}, Diacu, Kordlou \cite{DK}, Diacu, P\'erez-Chavela \cite{DP}. For a more detailed historical overview, please see \cite{D2}, \cite{D3}, \cite{D4}, \cite{DK}, or \cite{DPS1}.\\
  In this paper, we will prove the following two theorems:
  \begin{theorem}\label{Main Theorem1}
    For any rotopulsating solution of (\ref{EquationsOfMotion}) formed by vectors $\{\textbf{q}_{i}\}_{i=1}^{n}$ as defined in (\ref{Expression homographic orbits}), the vectors $\{\textbf{Q}_{i}\}_{i=1}^{n}$ have to form a regular polygon if $\rho$ is non-constant.
  \end{theorem}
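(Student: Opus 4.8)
The plan is to substitute the explicit form (\ref{Expression homographic orbits}) of the position vectors $\textbf{q}_i$ into the equations of motion (\ref{EquationsOfMotion}) and then to run a separation-of-variables argument that becomes available precisely because $\rho$ is non-constant. Concretely, I would first compute the acceleration $\ddot{\textbf{q}}_i$ from (\ref{Expression homographic orbits}); this is a finite sum of terms, each the product of an explicit scalar function of $t$ — built from $\rho,\dot\rho,\ddot\rho$ and from the rotation angles together with their first two derivatives — with a fixed linear combination of the constant coordinates making up $\textbf{Q}_i$. Then I would evaluate the right-hand side of (\ref{EquationsOfMotion}). The point that makes this workable is that, by the form of (\ref{Expression homographic orbits}), the inner products $\textbf{q}_i\cdot\textbf{q}_j$ — and hence the gravitational denominators and the term proportional to $(\dot{\textbf{q}}_i\cdot\dot{\textbf{q}}_i)\textbf{q}_i$ — turn into explicit expressions in the time-dependent functions of the ansatz (in particular in $\rho$) with coefficients that are fixed quantities $\textbf{Q}_i\cdot\textbf{Q}_j$ encoding the shape of the configuration.

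Comparing components, each equation of motion collapses to an identity of the shape
\begin{equation*}
  \sum_{\ell} g_\ell(t)\,c_{\ell,i}=0 ,
\end{equation*}
holding for all $t$ in the interval of definition, where the $g_\ell$ are the time-dependent scalars just described and the $c_{\ell,i}$ involve only the masses and the constants defining the $\textbf{Q}_i$. The heart of the proof is to show that, so long as $\rho$ is non-constant, the family $\{g_\ell\}$ occurring in these identities is linearly independent on every subinterval, which forces $c_{\ell,i}=0$ for every $\ell$ and $i$. This is where I expect the main obstacle to lie: one has to exclude accidental cancellation between the terms of kinematic origin (polynomials in $\dot\rho,\ddot\rho$ and trigonometric expressions in the angles) and those of gravitational origin (negative powers of polynomials in $\rho$), and carrying this out cleanly generally requires a case analysis according to which of the auxiliary angular functions in (\ref{Expression homographic orbits}) are constant and which vary in time — the same sort of bookkeeping done in \cite{D2}, \cite{D3} and \cite{T}.

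Granting the vanishing of all the $c_{\ell,i}$, the conclusion follows by elementary, if somewhat lengthy, algebra: one subfamily of these relations constrains the mutual positions of the $\textbf{Q}_i$, forcing them to lie on a common circle, equally spaced around it — that is, to be the vertices of a regular polygon — while the remaining relations then turn out to be automatically satisfied (and, where relevant, to determine the masses). This establishes Theorem \ref{Main Theorem1}.
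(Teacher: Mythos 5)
Your overall strategy matches the paper's: substitute the ansatz (\ref{Expression homographic orbits}) into (\ref{EquationsOfMotion}), reduce each particle's equation to a scalar identity in $t$, and exploit the non-constancy of $\rho$ to turn that identity into time-independent constraints on the configuration. Two remarks on the middle step. First, your worry about ``accidental cancellation between the terms of kinematic origin and those of gravitational origin'' largely dissolves if you project equation $i$ onto $\textbf{Q}_{i}$ and onto $\begin{pmatrix}0&1\\-1&0\end{pmatrix}\textbf{Q}_{i}$ and then compare the resulting scalar equations for two different particles: since all $\|\textbf{Q}_{i}\|_{2}$ are equal, the kinematic left-hand sides are identical for every $i$ and drop out of the comparison, leaving only the gravitational sums. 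This is exactly what the paper's Criterion~\ref{criterion} accomplishes --- the necessary conditions come out as $b_{1}=\cdots=b_{n}$ (and $c_{i}=0$, the latter via conservation of angular momentum, Lemma~\ref{The Lemma}), with $b_{i}$ as in (\ref{bi}). Second, your conclusion that linear independence of the $g_{\ell}$ forces \emph{all} coefficients $c_{\ell,i}$ to vanish is too strong and is not what actually comes out. The $\rho$-dependence enters only through the functions $\rho\mapsto(2-\sigma\rho^{2}u)^{-3/2}$ with $u=1-\cos(\beta_{i}-\beta_{j})\in(0,2]$; since $\rho$ is non-constant these are linearly independent for distinct values of $u$, and what one deduces from $b_{i}=b_{i'}$ is that, for every value of $u$, the total mass of the particles at ``chord parameter'' $u$ from vertex $i$ equals that from vertex $i'$ --- an equality of mass-weighted distance spectra, not the vanishing of individual coefficients (which would force the masses themselves to vanish).

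The genuine gap is the final step. Deducing from this spectral equality that the $\textbf{Q}_{i}$ are equally spaced on the unit circle is not ``elementary, if somewhat lengthy, algebra'': it is the actual content of the theorem, a nontrivial combinatorial argument about which configurations of angles can have identical mass-weighted spectra as seen from every vertex. The paper does not reprove this either, but it reduces its conditions precisely to those of Criterion 1 of \cite{D2} and then invokes the proof of Theorem 1.1 of \cite{T}, where that combinatorial step is carried out. As written, your proposal asserts the conclusion at exactly the point where the real work begins; you would need either to reproduce the argument of \cite{T} or to cite it explicitly.
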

  \begin{theorem}\label{Main Theorem2}
    Rotopulsating orbits formed by vectors $\{\textbf{q}_{i}\}_{i=1}^{n}$ as defined in (\ref{Expression homographic orbits}) exist if the vectors $\{\textbf{Q}_{i}\}_{i=1}^{n}$ form a regular polygon.
  \end{theorem}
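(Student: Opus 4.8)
The plan is to reverse the reduction carried out in the proof of Theorem \ref{Main Theorem1}: starting from the hypothesis that $\{\textbf{Q}_{i}\}_{i=1}^{n}$ forms a regular polygon, substitute the ansatz (\ref{Expression homographic orbits}) into the equations of motion (\ref{EquationsOfMotion}) and show that the resulting $n$ vector equations collapse to a single scalar ordinary differential equation (together with algebraic constraints) that is solvable.

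First I would record the consequences of the regular-polygon hypothesis. Placing the $\textbf{Q}_{i}$ at the vertices of a regular $n$-gon forces the mutual distances $q_{ij}$ entering (\ref{EquationsOfMotion}) to depend only on $|i-j| \bmod n$ together with the scalar functions of the ansatz (the "size" $\rho$ and the angular functions), and this compatibility in turn forces the masses to be equal, $m_{1}=\dots=m_{n}$. With equal masses, the interaction sum $\sum_{j\neq i}$ acting on particle $i$ is, by the rotational symmetry of the polygon, the image under the $i$-th rotation of the sum acting on particle $1$; hence it suffices to verify the equation of motion for one particle.

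Next I would decompose that single vector equation along the frame adapted to the ansatz: the radial direction, the plane(s) of rotation, and the direction(s) transverse to the motion. The transverse components should vanish identically because a regular polygon is balanced; the angular components should reduce either to trivial identities or to first-order relations that merely pin down the angular-velocity functions (which may be prescribed, e.g. taken constant); and the single radial component should yield an equation of the form $\ddot\rho = F(\rho)$, where $F$ collects the curvature contributions and the polygon's self-interaction. One must also carry along the constraint that each $\textbf{q}_{i}$ remain on the space of constant curvature, which in the ansatz is one further algebraic relation among the scalar functions. Existence then follows: either one exhibits the explicit constant-$\rho$ relative equilibrium, or, for the genuinely non-constant case, one invokes the standard existence theorem for ordinary differential equations applied to $\ddot\rho = F(\rho)$ on the open set of $\rho$-values where $F$ is defined and smooth, with initial data chosen to respect the constraint.

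I expect the main obstacle to be the bookkeeping in the decomposition step: one must check that the number of independent scalar equations produced after exploiting the polygon symmetry never exceeds the number of free scalar functions in (\ref{Expression homographic orbits}), so that no hidden obstruction forces $\rho$ to be constant — equivalently, that the reduction underlying Theorem \ref{Main Theorem1} is genuinely reversible precisely when the polygon is regular. A secondary technical point is to guarantee a non-degenerate interval of existence, i.e. that $F$ is finite on an open set of $\rho$-values, so that collisions are avoided and the configuration stays inside the curved space.
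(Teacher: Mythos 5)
Your plan is essentially the paper's own argument: substitute (\ref{Expression homographic orbits}) into (\ref{EquationsOfMotion}), decompose along $\textbf{Q}_{i}$, its rotation by $\pi/2$, and the $Z$-directions, reduce to an ODE system whose consistency conditions ($b_{1}=\dots=b_{n}$ and $c_{i}=0$ in the paper's Criterion) are verified for a regular polygon with equal masses via exactly the rotational symmetry you describe, after which existence for the reduced system is invoked (the paper quotes the global existence argument and the relevant theorems of \cite{D2} rather than redoing them). Two of your expectations would need correcting in the execution, though neither is fatal: the transverse components do not ``vanish identically'' --- they produce the genuinely nontrivial third equation $\ddot{Z}=\left(b_{i}-\sigma\dot{\rho}^{2}-\sigma\rho^{2}\dot{\theta}^{2}-\sigma\dot{Z}\odot_{k-2}\dot{Z}\right)Z$, whose independence of $i$ is precisely the condition $b_{1}=\dots=b_{n}$ --- and the radial equation is not an autonomous $\ddot{\rho}=F(\rho)$ but stays coupled to $\dot{\theta}$, $Z$ and $\dot{Z}$, being closed only by combining the angular momentum integral $\rho^{2}\dot{\theta}=\rho^{2}(0)\dot{\theta}(0)$ of Lemma~\ref{The Lemma} (which is also what forces $c_{i}=0$) with the constraint $\rho^{2}+Z\odot_{k-2}Z=\sigma^{-1}$. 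Also, ``the regular polygon forces equal masses'' belongs to the necessity direction (Theorem~\ref{Main Theorem1} and \cite{T}); for the existence claim one simply chooses the masses equal.
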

To prove these theorems, we will use a method strongly inspired by \cite{D2}, \cite{D3} and \cite{T}. Specifically, we will first deduce a necessary and sufficient criterion for the existence of rotopulsators. This will be done in section~\ref{A criterion for the existence of rotopulsators}. We will then prove Theorem~\ref{Main Theorem1} and Theorem~\ref{Main Theorem2} in section~\ref{Proof of Main Theorem 1} and section~\ref{Proof of Main Theorem 2} respectively.
\section{A criterion for the existence of rotopulsators}\label{A criterion for the existence of rotopulsators}
In this section, we will formulate a necessary and sufficient criterion for the existence of rotopulsating orbits of the type described in (\ref{Expression homographic orbits}). \\
Consider the $n$-body problem in spaces of constant curvature $\kappa\neq 0$. \\
As has been shown in \cite{D4}, we may assume that $\kappa$ equals either $-1$, or $1$. \\
We will denote the masses of its $n$ point particles to be $m_{1}$, $m_{2}$,..., $m_{n}>0$ and their positions by the $k$-dimensional vectors
\begin{align*}\textbf{q}_{i}^{T}=(q_{i1},q_{i2},...,q_{ik})\in\textbf{M}_{\kappa}^{k-1},\textrm{ }
i=\overline{1,n}\end{align*}
where
\begin{align*}\textbf{M}_{\kappa}^{k-1}=\{(x_{1},x_{2},...,x_{k})\in\mathbb{R}^{k}|\textrm{
}\kappa(x_{1}^{2}+x_{2}^{2}+...+x_{k-1}^{2}+\sigma x_{k}^{2})=1\},\textrm{ }k\in\mathbb{N}\end{align*}
and
\begin{align*}
    \sigma=\begin{cases}
      \hspace{0.25cm}1 &\textrm{ for }\kappa> 0\\
      -1 &\textrm{ for }\kappa<0
    \end{cases}.
  \end{align*}
Furthermore, consider for $m$-dimensional vectors $\textbf{a}=(a_{1},a_{2},...,a_{m})$, \\ $\textbf{b}=(b_{1},b_{2},...,b_{m})$ the inner product
  \begin{align}\label{odot inner product}
    \textbf{a}\odot_{m}\textbf{b}=a_{1}b_{1}+a_{2}b_{2}+...+a_{m-1}b_{m-1}+\sigma
    a_{m}b_{m}.
  \end{align}
Then, following \cite{D1}, \cite{D2}, \cite{D3}, \cite{DPS1}, \cite{DPS2}, \cite{DPS3} and the assumption that $\kappa=\pm 1$ from \cite{D4}, we define the equations of motion for the curved $n$-body problem as the dynamical system described by
\begin{align}\label{EquationsOfMotion}
  \ddot{\textbf{q}}_{i}=\sum\limits_{j=1, j\neq i}^{n}\frac{m_{j}[\textbf{q}_{j}-(\sigma\textbf{q}_{i}\odot_{k}\textbf{q}_{j})\textbf{q}_{i}]}{[\sigma-(\textbf{q}_{i}\odot_{k}\textbf{q}_{j})^{2}]^{\frac{3}{2}}}-(\sigma\dot{\textbf{q}}_{i}\odot_{k}\dot{\textbf{q}}_{i})\textbf{q}_{i},\textrm{ }i=\overline{1,n}.
\end{align}
Let
\begin{align*}
  T(t)=\begin{pmatrix}
    \cos(\theta(t)) & -\sin(\theta(t)) \\
    \sin(\theta(t)) & \cos(\theta(t))
  \end{pmatrix}
\end{align*}
be a $2\times 2$ rotation matrix, where $\theta(t)$ is some real valued, twice continuously differentiable, scalar function, for which $\theta(0)=0$. \\
Furthermore, let $\rho(t)$ be a nonnegative, twice continuously differentiable, scalar function. \\
We will consider rotopulsating orbit solutions of (\ref{EquationsOfMotion}) of the form
\begin{align}\label{Expression homographic orbits}
  \textbf{q}_{i}(t)=\begin{pmatrix}\rho(t)T(t)\textbf{Q}_{i}\\ Z(t)\end{pmatrix}
\end{align}
where $\textbf{Q}_{i}\in\mathbb{R}^{2}$ is a constant vector and $Z(t)\in\mathbb{R}^{k-2}$ is a twice differentiable, vector valued function. \\
Finally, before formulating our criterion, we need to introduce some notation and a lemma:\\
Let $m\in\mathbb{N}$. Let $\langle\cdot,\cdot\rangle_{m}$ be the Euclidean inner product on $\mathbb{R}^{m}$ and let $\|\cdot\|_{m}$ be the Euclidean norm on $\mathbb{R}^{m}$.
Let $i$, $j\in\{1,...,n\}$. By construction $\|Q_{i}\|_{2}=\|Q_{j}\|_{2}$ for all $i$, $j\in\{1,...,n\}$ and we will assume that  $\|\textbf{Q}_{i}\|_{2}=1$. Let $\beta_{i}$ be the angle between $\textbf{Q}_{i}$ and the first coordinate axis.
The lemma we will need to prove our criterion is:
\begin{lemma}\label{The Lemma}
 The functions $\rho$ and $\theta$, are related through the following formula:  $\rho^{2}(t)\dot{\theta}(t)=\rho^{2}(0)\dot{\theta}(0)$.
\end{lemma}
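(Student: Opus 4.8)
The plan is to recognise $\rho^{2}\dot\theta$ (up to the positive constant factor $\sum_{i=1}^{n}m_{i}$) as the planar angular-momentum first integral of (\ref{EquationsOfMotion}) and to prove its conservation directly from the equations of motion. Write the first two components of $\textbf{q}_{i}$ as the planar vector $\textbf{u}_{i}(t)=\rho(t)T(t)\textbf{Q}_{i}\in\mathbb{R}^{2}$ and set $J=\begin{pmatrix}0 & -1\\ 1 & 0\end{pmatrix}$. Since $T=\cos(\theta)I+\sin(\theta)J$, the matrices $J$ and $T$ are orthogonal and commute, and $\dot T=\dot\theta\, JT$. Projecting (\ref{EquationsOfMotion}) onto its first two coordinates gives
\begin{align*}
\ddot{\textbf{u}}_{i}=\sum_{j=1,\,j\neq i}^{n}\frac{m_{j}[\textbf{u}_{j}-(\sigma\textbf{q}_{i}\odot_{k}\textbf{q}_{j})\textbf{u}_{i}]}{[\sigma-(\textbf{q}_{i}\odot_{k}\textbf{q}_{j})^{2}]^{\frac{3}{2}}}-(\sigma\dot{\textbf{q}}_{i}\odot_{k}\dot{\textbf{q}}_{i})\textbf{u}_{i},\quad i=\overline{1,n},
\end{align*}
the scalars $\sigma\textbf{q}_{i}\odot_{k}\textbf{q}_{j}$, $\sigma\dot{\textbf{q}}_{i}\odot_{k}\dot{\textbf{q}}_{i}$ and $[\sigma-(\textbf{q}_{i}\odot_{k}\textbf{q}_{j})^{2}]^{3/2}$ being unaffected by the projection.

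Next I would consider $c:=\sum_{i=1}^{n}m_{i}\langle J\textbf{u}_{i},\dot{\textbf{u}}_{i}\rangle_{2}$ and show $\dot c\equiv 0$. Differentiating, $\dot c=\sum_{i}m_{i}\langle J\dot{\textbf{u}}_{i},\dot{\textbf{u}}_{i}\rangle_{2}+\sum_{i}m_{i}\langle J\textbf{u}_{i},\ddot{\textbf{u}}_{i}\rangle_{2}$, and the first sum vanishes term by term because $\langle J\textbf{v},\textbf{v}\rangle_{2}=0$ for every $\textbf{v}\in\mathbb{R}^{2}$. Substituting the projected equations of motion into the second sum, the two terms proportional to $\textbf{u}_{i}$ vanish for the same reason, leaving $\sum_{i}m_{i}\langle J\textbf{u}_{i},\ddot{\textbf{u}}_{i}\rangle_{2}=\sum_{i}\sum_{j\neq i}\frac{m_{i}m_{j}\langle J\textbf{u}_{i},\textbf{u}_{j}\rangle_{2}}{[\sigma-(\textbf{q}_{i}\odot_{k}\textbf{q}_{j})^{2}]^{3/2}}$. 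In this double sum the denominator is symmetric under $i\leftrightarrow j$ (since $\textbf{q}_{i}\odot_{k}\textbf{q}_{j}=\textbf{q}_{j}\odot_{k}\textbf{q}_{i}$) while $\langle J\textbf{u}_{i},\textbf{u}_{j}\rangle_{2}=-\langle J\textbf{u}_{j},\textbf{u}_{i}\rangle_{2}$ is antisymmetric, so the sum equals its own negative and hence is $0$. Therefore $\dot c\equiv 0$, i.e. $c$ is independent of $t$.

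Finally I would evaluate $c$ on the ansatz. From $\dot{\textbf{u}}_{i}=\dot\rho\, T\textbf{Q}_{i}+\rho\dot\theta\, JT\textbf{Q}_{i}$ and the orthogonality of $J$ and $T$ one obtains $\langle J\textbf{u}_{i},\dot{\textbf{u}}_{i}\rangle_{2}=\rho\dot\rho\,\langle JT\textbf{Q}_{i},T\textbf{Q}_{i}\rangle_{2}+\rho^{2}\dot\theta\,\langle JT\textbf{Q}_{i},JT\textbf{Q}_{i}\rangle_{2}=\rho^{2}\dot\theta\,\|\textbf{Q}_{i}\|_{2}^{2}=\rho^{2}(t)\dot\theta(t)$, using $\|\textbf{Q}_{i}\|_{2}=1$. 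Hence $c=\bigl(\sum_{i=1}^{n}m_{i}\bigr)\rho^{2}(t)\dot\theta(t)$, and since $\sum_{i=1}^{n}m_{i}>0$ is a nonzero constant and $c$ does not depend on $t$, evaluating at $t=0$ yields $\rho^{2}(t)\dot\theta(t)=\rho^{2}(0)\dot\theta(0)$, as claimed.

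I expect the only delicate points to be bookkeeping: correctly isolating the first two components of (\ref{EquationsOfMotion}) and verifying that the symmetry/antisymmetry pairing genuinely annihilates the interaction sum. There is no real analytic obstacle here — this is precisely the curved-space analogue of conservation of angular momentum about the rotation axis, and the form of (\ref{Expression homographic orbits}) is chosen so that the components $Z(t)$ never enter the computation.
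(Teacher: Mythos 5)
Your proposal is correct. It rests on the same underlying idea as the paper's proof: the quantity $\sum_{i}m_{i}(q_{i1}\dot q_{i2}-q_{i2}\dot q_{i1})$ (your $c=\sum_{i}m_{i}\langle J\textbf{u}_{i},\dot{\textbf{u}}_{i}\rangle_{2}$, the paper's bivector component $C_{12}$) is a first integral, and evaluating it on the ansatz (\ref{Expression homographic orbits}) gives $\bigl(\sum_{i}m_{i}\bigr)\rho^{2}\dot\theta$, whence the claim. The difference is in how conservation is established: the paper simply cites Diacu's integral of motion $\sum_{i}m_{i}\dot{\textbf{q}}_{i}\wedge\textbf{q}_{i}=\textbf{c}$ from the literature and extracts the $\textbf{e}_{1}\wedge\textbf{e}_{2}$ component, whereas you reprove the conservation of that single component from scratch — projecting (\ref{EquationsOfMotion}) onto the first two coordinates and killing the interaction sum by the symmetry of $\textbf{q}_{i}\odot_{k}\textbf{q}_{j}$ against the antisymmetry of $\langle J\textbf{u}_{i},\textbf{u}_{j}\rangle_{2}$. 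Your version is self-contained and arguably cleaner (the final evaluation $\langle J\textbf{u}_{i},\dot{\textbf{u}}_{i}\rangle_{2}=\rho^{2}\dot\theta$ is more direct than the paper's chain of matrix manipulations with $T^{T}$ and $\dot T$); the paper's version is shorter on the page because the hard part is outsourced to \cite{D3}. All the individual steps you flag as delicate — the componentwise projection, the antisymmetry cancellation, and the orthogonality/commutation of $J$ and $T$ — check out.
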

\begin{proof}
  In \cite{D3}, using the wedge product, Diacu proved that
  \begin{align*}
    \sum\limits_{i=1}^{n}m_{i}\dot{\textbf{q}}_{i}\wedge\textbf{q}_{i}=\textbf{c}
  \end{align*}
  where $\textbf{c}$ is a constant bivector. \\
  If $\{\textbf{e}_{i}\}_{i=1}^{k}$ are the standard base vectors in $\mathbb{R}^{k}$, then we can write $\textbf{c}$ as
  \begin{align}\label{cwedge1}
    \textbf{c}=\sum\limits_{i=1}^{k}\sum\limits_{j=1}^{k}c_{ij}\textbf{e}_{i}\wedge\textbf{e}_{j}.
  \end{align}
  where $\{c_{ij}\}_{i=1,j=1}^{k}$ are constants. As $\textbf{e}_{i}\wedge \textbf{e}_{j}=-\textbf{e}_{j}\wedge\textbf{e}_{i}$ and $\textbf{e}_{i}\wedge\textbf{e}_{i}=0$ (see \cite{D3}), for $i$, $j\in\{1,...,n\}$, we can rewrite (\ref{cwedge1}) as
  \begin{align}
    \textbf{c}=\sum\limits_{i=1}^{k}\sum\limits_{j=i+1}^{k}C_{ij}\textbf{e}_{i}\wedge\textbf{e}_{j}
  \end{align}
  where $C_{ij}=c_{ij}-c_{ji}$. \\
  Calculating $C_{12}$,  will give us our result: \\
  Note that  \begin{align}\label{properties of T}T^{T}=T^{-1}\textrm{ and }\dot{T}=\dot{\theta}\begin{pmatrix}
    0 & -1\\ 1 & 0
  \end{pmatrix}T\end{align}
  and
  \begin{align}
    C_{12}&=\sum\limits_{i=1}^{n}m_{i}\left(q_{i1}\dot{q}_{i2}-q_{i2}\dot{q}_{i1}\right)\nonumber\\
    &=\sum\limits_{i=1}^{n}m_{i}\left(q_{i1},q_{i2}\right)\begin{pmatrix}
      0 & 1\\ -1 & 0
    \end{pmatrix}\begin{pmatrix}\dot{q}_{i1}\\
    \dot{q}_{i2}\end{pmatrix}.\label{c2l-12l formula 1}
  \end{align}
  Using (\ref{Expression homographic orbits}) with (\ref{c2l-12l formula 1}) gives
  \begin{align}
    C_{12}&=\sum\limits_{i=1}^{n}m_{i}\rho^{2}\left(Q_{i1},Q_{i2}\right)T^{T}\begin{pmatrix}
      0 & 1\\ -1 & 0
    \end{pmatrix}\dot{T}\begin{pmatrix}Q_{i1}\\
    Q_{i2}\end{pmatrix}\nonumber\\
    &+\sum\limits_{i=1}^{n}m_{i}\rho\dot{\rho}\left(Q_{i1},Q_{i2}\right)T^{T}\begin{pmatrix}
      0 & 1\\ -1 & 0
    \end{pmatrix}T\begin{pmatrix}Q_{i1}\\
    Q_{i2}\end{pmatrix}.\label{c2l-12l formula 2}
  \end{align}
  Note that
  \begin{align*}
    \rho\dot{\rho}\left(Q_{i1},Q_{i2}\right)T^{T}\begin{pmatrix}
      0 & 1\\ -1 & 0
    \end{pmatrix}T\begin{pmatrix}Q_{i1}\\
    Q_{i2}\end{pmatrix}\\ =\frac{\dot{\rho}}{\rho}(q_{i1},q_{i2})\begin{pmatrix}
      0 & 1\\ -1 & 0
    \end{pmatrix}\begin{pmatrix}
      q_{i1}\\ q_{i2}
    \end{pmatrix}=0.
  \end{align*}
  So, using (\ref{properties of T}) repeatedly, we get that
  \begin{align*}
    C_{12}&=\sum\limits_{i=1}^{n}m_{i}\rho^{2}\dot{\theta}\left(Q_{i\textrm{ }1},Q_{i2}\right)T^{T}\begin{pmatrix}
      0 & 1\\ -1 & 0
    \end{pmatrix}\begin{pmatrix}0 & -1\\ 1 & 0\end{pmatrix}T\begin{pmatrix}Q_{i1}\\
    Q_{i1}\end{pmatrix}+0\\
    &=\sum\limits_{i=1}^{n}m_{i}\rho^{2}\dot{\theta}\left(Q_{i1},Q_{i2}\right)T^{T}T\begin{pmatrix}Q_{i1}\\
    Q_{i2}\end{pmatrix}\\
    &=\sum\limits_{i=1}^{n}m_{i}\rho^{2}\dot{\theta}\left(Q_{i1},Q_{i2}\right)\begin{pmatrix}Q_{i1}\\
    Q_{i2}\end{pmatrix}
  \end{align*}
  which means that
  \begin{align}\label{c2l-12l final formula}
    C_{12}=\rho^{2}\dot{\theta}\sum\limits_{i=1}^{n}m_{i}\left(Q_{i1}^{2}+Q_{i2}^{2}\right). \end{align}
As, by construction
\begin{align*}
  \sum\limits_{i=1}^{n}m_{i}\left(Q_{i1}^{2}+Q_{i2}^{2}\right)>0,
\end{align*}
we may divide both sides of (\ref{c2l-12l final formula}) by
\begin{align*}
  \sum\limits_{i=1}^{n}m_{i}\left(Q_{i1}^{2}+Q_{i2}^{2}\right),
\end{align*}
which gives that
\begin{align*}
  \rho^{2}\dot{\theta}=\frac{C_{12}}{\sum\limits_{i=1}^{n}m_{i}\left(Q_{i1}^{2}+Q_{i2}^{2}\right)},
\end{align*}
which is constant, so $\rho^{2}\dot{\theta}=\rho^{2}(0)\dot{\theta}(0)$.
\end{proof}
  We now have the following necessary and sufficient criterion for the existence of a rotopulsating orbit, as described in (\ref{Expression homographic orbits}):
\begin{criterion}\label{criterion}
  Let
  \begin{align}\label{bi}
    b_{i}=\sum\limits_{j=1, j\neq i}^{n}\frac{m_{j}(1-\cos{(\beta_{i}-\beta_{j}}))^{-\frac{1}{2}}}{(2-\sigma\rho^{2}(1-\cos{(\beta_{i}-\beta_{j})}))^{\frac{3}{2}}}.
  \end{align}
  Then necessary and sufficient conditions for the existence of a rotopulsating orbit of non-constant size are that
  $b_{1}=b_{2}=...=b_{n}$ and
  \begin{align}\label{c}
    0=\sum\limits_{j=1, j\neq i}^{n}\frac{m_{j}\sin{(\beta_{i}-\beta_{j})}}{(1-\cos{(\beta_{i}-\beta_{j})})^{\frac{3}2}(2-\sigma\rho^{2}(1-\cos{(\beta_{i}-\beta_{j})}))^{\frac{3}{2}}}
  \end{align}
  for all $i\in\{1,...,n\}$.
\end{criterion}
\begin{proof}
  Note that
  \begin{align}\label{Expression homographic orbits 2}
    \dot{T}=\dot{\theta}T\begin{pmatrix}
      0 & -1\\ 1 & 0
    \end{pmatrix}
  \end{align}
  and consequently
  \begin{align}\label{Expression homographic orbits 3}
    \ddot{T}=\ddot{\theta}T\begin{pmatrix}
      0 & -1\\ 1 & 0
    \end{pmatrix}-\dot{\theta}^{2}T.
  \end{align}
  Inserting (\ref{Expression homographic orbits}) into (\ref{EquationsOfMotion}) and using (\ref{Expression homographic orbits 2}) and (\ref{Expression homographic orbits 3}) gives for the first and second lines of (\ref{EquationsOfMotion}) that
  \begin{align}\label{ToTheCriterion1}
    T\left(\ddot{\rho}I_{2}+2\dot{\rho}\dot{\theta}\begin{pmatrix}
      0 & -1\\ 1 & 0
    \end{pmatrix}+\rho\left(\ddot{\theta}\begin{pmatrix}
      0 & -1\\ 1 & 0
    \end{pmatrix}-\dot{\theta}^2 I_{2}\right)\right)\textbf{Q}_{i}\nonumber\\
    =\rho T\left(\sum\limits_{j=1, j\neq i}^{n}\frac{m_{j}[\textbf{Q}_{j}-(\sigma \textbf{q}_{i}\odot_{k}\textbf{q}_{j})\textbf{Q}_{i}]}{[\sigma-(\textbf{q}_{i}\odot_{k}\textbf{q}_{j})^{2}]^{\frac{3}{2}}}-(\sigma\dot{\textbf{q}}_{i}\odot_{k}\dot{\textbf{q}}_{i})\textbf{Q}_{i}\right)
  \end{align}
  where $I_{2}$ is the $2\times 2$ identity matrix. \\
  For the last $k-2$ lines, we get
  \begin{align}\label{Expression Zodot Z 1}
    \ddot{Z}=\left(\sum\limits_{j=1, j\neq i}^{n}\frac{m_{j}[1-(\sigma \textbf{q}_{i}\odot_{k}\textbf{q}_{j})]}{[\sigma-(\textbf{q}_{i}\odot_{k}\textbf{q}_{j})^{2}]^{\frac{3}{2}}}-(\sigma\dot{\textbf{q}}_{i}\odot_{k}\dot{\textbf{q}}_{i})\right)Z.
  \end{align}
  Note that
  \begin{align}\label{Expression Zodot Z 2}
    \textbf{q}_{i}\odot_{k}\textbf{q}_{j}=\rho^{2}\langle \textbf{Q}_{i},\textbf{Q}_{j}\rangle_{2}+Z\odot_{k-2} Z.
  \end{align}
  As we have that $\langle \textbf{Q}_{i},\textbf{Q}_{i}\rangle_{2}=1$ and as by (\ref{Expression Zodot Z 2}), \begin{align*}\sigma^{-1}=\textbf{q}_{i}\odot_{k}\textbf{q}_{i}=\rho^{2}\langle\textbf{Q}_{i},\textbf{Q}_{i}\rangle_{2}+Z\odot_{k-2} Z, \end{align*} we may rewrite (\ref{Expression Zodot Z 2}) as
  \begin{align*}
    \textbf{q}_{i}\odot_{k}\textbf{q}_{j}=\sigma^{-1}+\rho^{2}\langle \textbf{Q}_{i},\textbf{Q}_{j}\rangle_{2}-\rho^{2},
  \end{align*}
  which can, in turn, be written as
  \begin{align}\label{Final expression qiqj}
    \textbf{q}_{i}\odot_{k}\textbf{q}_{j}=\sigma^{-1}+\rho^{2}(\cos{(\beta_{i}-\beta_{j})}-1).
  \end{align}
  Furthermore,
  \begin{align}\label{qdotqdot 1}
    \dot{\textbf{q}}_{i}\odot_{k}\dot{\textbf{q}}_{i}=\langle\dot{\rho}T\textbf{Q}_{i}+\rho\dot{T}\textbf{Q}_{i},\dot{\rho}T\textbf{Q}_{i}+\rho\dot{T}\textbf{Q}_{i}\rangle_{2}+\dot{Z}\odot_{k-2}\dot{Z}.
  \end{align}
  As $T$ is a rotation in $\mathbb{R}^{2}$, it is a unitary map, meaning that for $v$, $w\in\mathbb{R}^{2}$, $\langle Tv,Tw\rangle_{2}=\langle v,w\rangle_{2}$, meaning that (\ref{qdotqdot 1}) can be written as
  \begin{align}\label{qdotqdot 2}
    \dot{\textbf{q}}_{i}\odot_{k}\dot{\textbf{q}}_{i}=\langle\dot{\rho}\textbf{Q}_{i}+\rho T^{-1}\dot{T}\textbf{Q}_{i},\dot{\rho}\textbf{Q}_{i}+\rho T^{-1}\dot{T}\textbf{Q}_{i}\rangle_{2}+\dot{Z}\odot_{k-2}\dot{Z}.
  \end{align}
  Using (\ref{Expression homographic orbits 2}) with (\ref{qdotqdot 2}) gives
  \begin{align}\label{qdotqdot 3}
    \dot{\textbf{q}}_{i}\odot_{k}\dot{\textbf{q}}_{i}&=\dot{\rho}^{2}+2\rho\dot{\rho}\dot{\theta}\left\langle \textbf{Q}_{i},\begin{pmatrix}
       0 & -1\\ 1& 0
     \end{pmatrix}\textbf{Q}_{i}\right\rangle_{2}+\rho^{2}\dot{\theta}^{2}\|\textbf{Q}_{i}\|^{2}+\dot{Z}\odot_{k-2}\dot{Z}\nonumber \\
     &=\dot{\rho}^{2}+0+\rho^{2}\dot{\theta}^{2}+\dot{Z}\odot_{k-2}\dot{Z}.
  \end{align}
  Inserting (\ref{qdotqdot 3}) and (\ref{Final expression qiqj}) into (\ref{ToTheCriterion1}) and multiplying both sides by $T^{-1}$ provides us with
  \begin{align}\label{ToTheCriterion3}
    &\left((\ddot{\rho}-\rho\dot{\theta}^2)I_{2}+(2\dot{\rho}\dot{\theta}+\rho\ddot{\theta})\begin{pmatrix}
      0 & -1\\ 1 & 0
    \end{pmatrix}\right)\textbf{Q}_{i}\nonumber\\
    &=\rho\sum\limits_{j=1, j\neq i}^{n}\frac{m_{j}[\textbf{Q}_{j}-(1-\sigma\rho^{2}(1-\cos{(\beta_{i}-\beta_{j})}))\textbf{Q}_{i}]}{[\rho^{2}(1-\cos{(\beta_{i}-\beta_{j})})(2-\sigma\rho^{2}(1-\cos{(\beta_{i}-\beta_{j})}))]^{\frac{3}{2}}}\\ &-(\sigma\rho\dot{\rho}^{2}+\sigma\rho^{3}\dot{\theta}^{2}+\sigma\rho\dot{Z}\odot_{k-2}\dot{Z})\textbf{Q}_{i}. \nonumber
  \end{align}
  Taking the Euclidean inner product with $\textbf{Q}_{i}$ on both sides of (\ref{ToTheCriterion3}) and using that $\|Q_{i}\|_{2}=\|Q_{j}\|_{2}=1$, provides us with
  \begin{align}\label{ToTheCriterion3b}
    &\ddot{\rho}-\rho\dot{\theta}^2+\sigma\rho\dot{\rho}^{2}+\sigma\rho^{3}\dot{\theta}^{2}+\sigma\rho\dot{Z}\odot_{k-2}\dot{Z}\nonumber\\
    &=\left(\sigma-\frac{1}{\rho^{2}}\right)\sum\limits_{j=1, j\neq i}^{n}\frac{m_{j}[(1-\cos{(\beta_{i}-\beta_{j})})^{-\frac{1}{2}}]}{[(2-\sigma\rho^{2}(1-\cos{(\beta_{i}-\beta_{j})}))]^{\frac{3}{2}}}.
  \end{align}
  Taking the Euclidean inner product of (\ref{ToTheCriterion3}) with $\begin{pmatrix}0 & 1\\ -1& 0\end{pmatrix}\textbf{Q}_{i}$ and using that $\|Q_{i}\|_{2}=\|Q_{j}\|_{2}=1$ gives that
  \begin{align}\label{sin}
    2\dot{\rho}\dot{\theta}+\rho\ddot{\theta}=\sum\limits_{j=1, j\neq i}^{n}\frac{m_{j}\sin{(\beta_{i}-\beta_{j})}}{[(1-\cos{(\beta_{i}-\beta_{j})})(2-\sigma\rho^{2}(1-\cos{(\beta_{i}-\beta_{j})}))]^{\frac{3}{2}}}.
  \end{align}
  Let
  \begin{align*}
    &b_{i}:=\sum\limits_{j=1, j\neq i}^{n}\frac{m_{j}[(1-\cos{(\beta_{i}-\beta_{j})})^{-\frac{1}{2}}]}{[(2-\sigma\rho^{2}(1-\cos{\alpha_{ij}}))]^{\frac{3}{2}}}\textrm{ and }\\
    &c_{i}:=\sum\limits_{j=1, j\neq i}^{n}\frac{-m_{j}\sin{(\beta_{i}-\beta_{j})}}{[(1-\cos{(\beta_{i}-\beta_{j})})(2-\sigma\rho^{2}(1-\cos{(\beta_{i}-\beta_{j})}))]^{\frac{3}{2}}}.
  \end{align*}
  Inserting (\ref{qdotqdot 3}) and (\ref{Final expression qiqj}) into (\ref{Expression Zodot Z 1}), combined with (\ref{ToTheCriterion3b}) and (\ref{sin}) gives the following system of differential equations:
  \begin{align}\label{system of differential equations}
    \begin{cases}
      \ddot{\rho}=\rho\dot{\theta}^2-\sigma\rho\dot{\rho}^{2}-\sigma\rho^{3}\dot{\theta}^{2}-\sigma\rho\dot{Z}\odot_{k-2}\dot{Z}+\left(\sigma-\frac{1}{\rho^{2}}\right)b_{i}\\
      \ddot{\theta}=\frac{c_{i}}{\rho}-2\frac{\dot{\rho}}{\rho}\dot{\theta}\\
      \ddot{Z}=\left(b_{i}-\sigma\dot{\rho}^{2}-\sigma\rho^{2}\dot{\theta}^{2}-\sigma\dot{Z}\odot_{k-2}\dot{Z})\right)Z
    \end{cases}
  \end{align}
  For (\ref{system of differential equations}) to make sense, we need that
  \begin{align}\label{conditions}
    b_{1}=...=b_{n}\textrm{ and }c_{1}=...=c_{n}
  \end{align}
  which shows the necessity of (\ref{conditions}). \\
  Furthermore, that (\ref{system of differential equations}) has a global solution holds by the same argument as the argument used in the proof of Criterion 1 in \cite{D2} to prove global existence of a solution of (15) and (17). By the uniqueness of solutions to ordinary differential equations given suitable initial conditions, the solution to (\ref{system of differential equations}) must be a rotopulsating orbit, as every step from (\ref{ToTheCriterion1}) and (\ref{Expression Zodot Z 1}) to (\ref{system of differential equations}) is invertible. \\
  Thus (\ref{conditions}) is both necessary and sufficient. Finally, as by Lemma~\ref{The Lemma}\\
  $\rho^{2}\dot{\theta}=\rho^{2}(0)\dot{\theta}(0)$, we have that $\frac{d}{dt}(\rho^{2}\dot{\theta})=0$, which means that the left hand side of (\ref{sin}) equals zero, which means that $c_{i}=0$. This completes the proof.
\end{proof}
\section{Proof of Theorem~\ref{Main Theorem1}}\label{Proof of Main Theorem 1}
In Criterion~\ref{criterion}, let $r:=\rho$, $\alpha_{i}:=\beta_{i}$,  $\delta_{i}:=b_{i}$ and $\gamma_{i}:=c_{i}$.
Then the conditions of Criterion~\ref{criterion} become exactly the conditions of Criterion~1 in \cite{D2} with the added bonus that $\gamma_{i}=0$. The proof of Theorem~1.1 in \cite{T} is therefore a proof for Theorem~\ref{Main Theorem1} as well.
\section{Proof of Theorem~\ref{Main Theorem2}}\label{Proof of Main Theorem 2}
Let again $r:=\rho$, $\alpha_{i}:=\beta_{i}$,  $\delta_{i}:=b_{i}$ and $\gamma_{i}:=c_{i}$ in Criterion~\ref{criterion}. Then the conditions of Criterion~\ref{criterion} become exactly the conditions of Criterion~1 in \cite{D2} with the added bonus that $\gamma_{i}=0$. Theorem~\ref{Main Theorem2} now follows directly from the proofs of Theorem~1 and Theorem~2 in \cite{D2}.
\section{Acknowledgements}
The author is indebted to Florin Diacu and Dan Dai for all their advice.

\end{document}